\documentclass[10pt]{article}

\usepackage[margin=1.15in]{geometry}
\usepackage[hidelinks]{hyperref}
\usepackage{amsmath,amsthm,amsfonts,amssymb,latexsym,mathrsfs}
\usepackage{url}

\newtheorem{thm}{Theorem}[section]
\newtheorem{prob}{Problem}[section]

\newtheorem{lemma}{Lemma}[section]
\newtheorem{cor}{Corollary}[section]

\newtheorem{exam}{Example}

\newcommand{\ms}{\mathfrak{S}}
\newcommand{\des}{{\rm des\,}}
\newcommand{\arxiv}[1]{\href{https://arxiv.org/abs/#1}{\texttt{arXiv:#1}}}

\newcommand {\dx}{\frac{\mathrm{d}}{\mathrm{d}x}}
\newcommand{\mq}{\mathcal{Q}}
\newcommand{\mqn}{\mathcal{Q}_n}
\newcommand{\plat}{{\rm plat\,}}
\newcommand{\asc}{{\rm asc\,}}
\newcommand{\exc}{{\rm exc\,}}

\newcommand{\msn}{\mathfrak{S}_n}
\newcommand{\mdn}{\mathcal{D}}

\numberwithin{equation}{section}

\title{A general method of deducing the determinantal expressions for polynomial and its derivative}

\author{
Hongyi Lou\textsuperscript{a,b}\quad
Shi-Mei Ma\textsuperscript{c}\quad
Xinzhe Song\textsuperscript{a,b}\quad
Guiying Yan\textsuperscript{a,b}\quad
Yeong-Nan Yeh\textsuperscript{d}
}

\date{}

\begin{document}

\maketitle

\begingroup
\renewcommand{\thefootnote}{\alph{footnote}}
\footnotetext[1]{Academy of Mathematics and Systems Science, Chinese Academy of Sciences, Beijing 100190, P.R. China (\href{mailto:louhongyi20@mails.ucas.ac.cn}{louhongyi20@mails.ucas.ac.cn}, \href{mailto:songxinzhe@amss.ac.cn}{songxinzhe@amss.ac.cn}, \href{mailto:yangy@amss.ac.cn}{yangy@amss.ac.cn}).}
\footnotetext[2]{University of Chinese Academy of Sciences, Beijing 100049, P.R. China.}
\footnotetext[3]{School of Mathematics and Statistics, Shandong University of Technology, Zibo 255000, Shandong, China (\href{mailto:shimeimapapers@163.com}{shimeimapapers@163.com}).}
\footnotetext[4]{College of Mathematics and Physics, Wenzhou University, Wenzhou 325035, P.R. China (\href{mailto:mayeh@alum.sinica.edu.tw}{mayeh@alum.sinica.edu.tw}).}
\endgroup

\begin{abstract}
In this paper, we present a general method of deducing the determinantal expressions for a polynomial and its derivative.
As illustrations, we provide three determinantal expressions for the derivative of the Eulerian polynomial. 
Using a functional equation discovered by Gessel,
we also establish the determinantal expressions for the second-order Eulerian polynomial and its derivative.
\end{abstract}

\medskip
\noindent\textbf{Keywords:} Determinants; derivatives; Eulerian polynomials; Hessenberg matrices.

\medskip
\noindent\textbf{Mathematics Subject Classifications:} 15A15, 05A15.

\section{Introduction}
%%%%%%%%%%%%%%%%%%%%%%%%%%%%%%%%%%%%%%%%%%%
%%%%%%%%%%%%%%%%%%%%%%%%%%%%%%%%%%%%%%%%%%%%%%%%%%%%%%%%%%%%%%%%%%%%%%%%%%%%%%%%%
%%%%%%%%%%%%%%%%%%%%%%%%%%%5%%%%
Following Hwang, Chern and Duh~\cite{Hwang2020}, the general Eulerian recurrence can be defined by
\begin{equation}\label{Eulerian-type recurrence}
\mathcal{P}_n(x)=(\alpha(x)n+\gamma(x))\mathcal{P}_{n-1}(x)+\beta(x)(1-x)\dx \mathcal{P}_{n-1}(x),    
\end{equation}
where $\mathcal{P}_0(x),\alpha(x),\gamma(x),\beta(x)$ are given functions of $x$.
In particular, the classical Eulerian polynomial $A_n(x)$ 
can be defined by the recursion
\begin{equation*}\label{Anx-recu}
A_n(x)=nxA_{n-1}(x)+x(1-x)\dx A_{n-1}(x),~A_0(x)=1.
\end{equation*}
It is well known that $A_n(x)$ is the descent enumerator over all permutations of $[n]=\{1,2,\ldots,n\}$.
As noted by Gessel~\cite[A065826]{Sloane}, the polynomial $\dx A_{n}(x)$ is the descent enumerator over all permutations of $[n+1]$ that start with an ascent. 

Polynomial and its derivative are closely related and they share several similar properties, 
including the distribution of zeros~\cite{Kumar2022,Liu07} and inequalities~\cite{Aziz88,Ma24}. For example, Kumar~\cite{Kumar2022} recently proved
a sharpened form of the well-known Erd\"os-Lax inequality.
This paper is motivated by the following problem.
\begin{prob}\label{problem0}
For any sequence of polynomials $\{\mathcal{P}_n(x)\}_{n\geqslant0}$ that satisfies~\eqref{Eulerian-type recurrence}, 
whether there exist two functions $f_n(x_0,x_1,x_2,\ldots,x_{n-1})$ and $g_n(x_0,x_1,x_2,\ldots,x_{n-1})$ such that 
 $$\mathcal{P}_n(x)=f_n\left(\mathcal{P}_0(x),\mathcal{P}_1(x),\mathcal{P}_2(x),\ldots,\mathcal{P}_{n-1}(x)\right),$$
  $$\dx \mathcal{P}_n(x)=g_n\left(\mathcal{P}_0(x),\mathcal{P}_1(x),\mathcal{P}_2(x),\ldots,\mathcal{P}_{n-1}(x)\right).$$
\end{prob}

Recently, determinantal representations of special numbers and polynomials have been
pursued by several authors, see~\cite{Chow2401,Chow2402,Ma2026deter,Munarini2020} and~\cite{Qi2015,QiChapman2016,QiWangGuo2016,Qi2019}.
For instance, by using a recurrence relation, 
Munarini~\cite{Munarini2020} showed that the major index polynomial over the set of derangements
can be expressed as the determinant of a Hessenberg matrix.
In particular, Qi, Wang and Guo~\cite{QiWangGuo2016} applied determinant techniques to recover two determinantal representations for derangement numbers, while Qi~\cite{Qi2019} further obtained determinantal expressions and recurrence relations for Fubini and Eulerian polynomials.
By using the $n$th derivative of a quotient of 
two functions~\cite[p.~40]{Bourbaki04}, Chow~\cite{Chow2023} found that $A_n(x)$ can be expressed as the following lower
Hessenberg determinant:
 \begin{equation}\label{Anx-det01}
  A_{n}(x)=\begin{vmatrix}
x&-1&\cdots&0&0\\
x(1-x)&2x&\cdots&0&0\\
\vdots&\vdots&&\ddots&\vdots\\
x(1-x)^{n-2}&\binom{n-1}{1}x(1-x)^{n-3}&\cdots&\binom{n-1}{n-2}x&-1\\
x(1-x)^{n-1}&\binom{n}{1}x(1-x)^{n-2}&\cdots&\binom{n}{n-2}x(1-x)&\binom{n}{n-1}x
    \end{vmatrix}_{n\times n}.
  \end{equation}
According to~\cite[Theorem~5]{Ma2026deter}, another expression of $A_n(x)$ is given as follows:
\begin{equation}\label{Anx-det02}
A_{n}(x)=\begin{vmatrix}
x&-1&0&\cdots&0\\
A_1(x)&x&-1&\cdots&0\\
A_2(x)&x(1-x)&2x&\cdots&0\\
\vdots&\vdots&\vdots&&-1\\
A_{n-1}(x)&x(1-x)^{n-2}&\binom{n-1}{1}x(1-x)^{n-3}&\cdots&\binom{n-1}{n-2}x\\
    \end{vmatrix}_{n\times n},
  \end{equation}
which was established with the aid of context-free grammars.

In the next section, we provide a general method of deducing the determinantal expressions for polynomial and its derivative, which gives a partial answer to Problem~\ref{problem0}. 
As illustrations, we provide three determinantal expressions for the derivative of Eulerian polynomial.
In Section~\ref{section03}, by using a functional equation discovered by Gessel~\cite{Gessel20},
we present the determinantal expressions for the second-order Eulerian polynomial and its derivative. 
%%%%%%%%%%%%%%%%%%%%%%%%%%%%%%%%%%%
%%%%%%%%%%%%%%%%%%%%%%%%%%%%%%%%%%%
\section{Functional equations and Eulerian polynomials}
%%%%%%%%%%%%%%%%%%%%%%%%%%%%%%%%%%%
%%%%%%%%%%%%%%%%%%%%%%%%%%%%%%%%%%%
%%%%%%%%%%%%%%%%%%%%%%%%%%%%%%%%%%%
\subsection{Preliminary}
%%%%%%%%%%%%%%%%%%%%%%%%%%%%%%%%%%%
%%%%%%%%%%%%%%%%%%%%%%%%%%%%%%%%%%%
%%%%%%%%%%%%%%%%%%%%%%%%%%%%%%%%%%%%%%%%%%%%%%%%%
\hspace*{\parindent}

The Hessenberg matrix appears frequently in the Schur decomposition for the nonsymmetric eigenvalue problem, and the factorization problem of matrices (see~\cite{Maroulas2016}).
We say that a matrix \(H_n=(h_{ij})_{1\leqslant i,j\leqslant n}\) is called a {\it lower Hessenberg matrix} if \(h_{ij}=0\) whenever \(j>i+1\).
We set \(H_0=1\). Cahill et al.~\cite{Cahill2002} found that:
\begin{equation*}\label{eq:hessenberg-general-recurrence}
\det H_n
=h_{n,n}\det H_{n-1}
+\sum_{r=1}^{n-1}
\left((-1)^{n-r}h_{n,r}\prod_{j=r}^{n-1}h_{j,j+1}\det H_{r-1}\right).
\end{equation*}
In particular, when the superdiagonal entries are all equal to \(-1\), it reduces to
\begin{equation}\label{eq:hessenberg-minus-one-recurrence}
\det H_n=\sum_{r=1}^{n}h_{n,r}\det H_{r-1}.
\end{equation}

The following result is stated in~\cite[p.~40]{Bourbaki04}. See~\cite[Lemma~6]{Chow2401} for a proof of it.
\begin{lemma}\label{lemma1}
Let $u=u(x)$ and $v=v(x)$ be two real functions which are $n$ times differentiable on an
interval $I\subset \mathbb{R}$. If one puts $\frac{\mathrm{d}^n}{\mathrm{d}x^n}\left(\frac{u}{v}\right)=(-1)^n\frac{w_n}{v^{n+1}}$ at every point where $v(x)\neq0$, then
 \begin{equation}\label{eq01}
  w_n=\begin{vmatrix}
u&v&0&0&\cdots&0\\
u'&v'&\binom{1}{1}v&0&\cdots&0\\
u''&v''&\binom{2}{1}v'&\binom{2}{2}v&\cdots&0\\
\vdots&\vdots&\vdots&\vdots&\ddots&\vdots\\
u^{(n-1)}&v^{(n-1)}&\binom{n-1}{1}v^{(n-2)}&\binom{n-1}{2}v^{(n-3)}&\cdots&\binom{n-1}{n-1}v\\
u^{(n)}&v^{(n)}&\binom{n}{1}v^{(n-1)}&\binom{n}{2}v^{(n-2)}&\cdots&\binom{n}{n-1}v{'}
    \end{vmatrix}.
  \end{equation}
\end{lemma}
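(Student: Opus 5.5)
Write $q=u/v$, so that $u=qv$. The plan is to turn Leibniz's rule into a lower triangular linear system for the derivatives of $q$, and then solve for $q^{(n)}$ with Cramer's rule. Differentiating $u=qv$ $k$ times gives
\[
u^{(k)}=\sum_{j=0}^{k}\binom{k}{j}v^{(k-j)}q^{(j)},\qquad k=0,1,\ldots,n .
\]
This is a linear system in the $n+1$ unknowns $q,q',\ldots,q^{(n)}$. Its coefficient matrix $L=(\binom{k}{j}v^{(k-j)})_{0\le k,j\le n}$ is lower triangular with every diagonal entry equal to $v$, so $\det L=v^{n+1}\neq0$ at every point where $v(x)\ne0$. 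The system therefore has a unique solution there.

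By Cramer's rule, $q^{(n)}=\det L_n/v^{n+1}$, where $L_n$ is $L$ with its last column (the column of $q^{(n)}$) replaced by the vector $(u,u',\ldots,u^{(n)})^{T}$. Row $k$ of $L_n$ then reads
\[
\left(v^{(k)},\ \binom{k}{1}v^{(k-1)},\ \ldots,\ \binom{k}{n-1}v^{(k-n+1)},\ u^{(k)}\right),
\]
where the entries with $j>k$ vanish.

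It remains to show $\det L_n=(-1)^n w_n$, that is, to bring $L_n$ into the shape~\eqref{eq01}. Moving the last column to the front by a cyclic permutation passes it across $n$ columns, which produces the sign $(-1)^n$. The resulting matrix has first column $(u^{(k)})_k$, second column $(v^{(k)})_k$, and $(j+2)$-th column $(\binom{k}{j}v^{(k-j)})_k$ for $j=1,\ldots,n-1$. This is exactly the matrix in~\eqref{eq01}: row $k$ (counting from $0$) has $v$ in position $k+2$ and zeros to its right, matching the displayed Hessenberg pattern. Hence $q^{(n)}=(-1)^n w_n/v^{n+1}$, which is the claim.

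The only delicate step is this bookkeeping of the sign and the index shift when matching the binomial entries to~\eqref{eq01}. I would check it directly on the cases $n=1$ and $n=2$: for $n=1$ one gets $w_1=uv'-u'v$, and $(u/v)'=-(uv'-u'v)/v^2$ agrees.
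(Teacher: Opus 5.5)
Your proof is correct and complete. The Leibniz identities $u^{(k)}=\sum_{j=0}^{k}\binom{k}{j}v^{(k-j)}q^{(j)}$ form a lower triangular system with determinant $v^{n+1}$. Cramer's rule then expresses $q^{(n)}$ as a ratio of determinants. Moving the replaced column to the front contributes exactly $(-1)^n$ and reproduces the matrix in \eqref{eq01} entry for entry. This includes the last row: it ends in $\binom{n}{n-1}v'$ rather than in $v$ because the column $j=n$ is the one you replaced.

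The paper gives no proof of its own; it quotes the formula from Bourbaki and refers to Chow's Lemma~6, so there is no in-paper argument to compare against. The natural alternative is induction on $n$. Differentiating the defining identity gives $w_{n+1}=(n+1)v'w_n-v\,w_n'$, but you would then have to verify that the determinant satisfies the same recursion, which is noticeably messier than your Cramer's-rule bookkeeping.

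One point worth making explicit: $q=u/v$ is $n$ times differentiable near any point where $v\neq0$. This holds because $v$ is continuous, hence nonvanishing on a neighbourhood, so the Leibniz identities and the uniqueness argument are legitimate at every such point.
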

The Lemma~\ref{lemma1} has been used extensively in recent years, see~\cite{Ma2026deter} for instance. 
%%%%%%%%%%%%%%%%%%%%%%%%%%%%%%%%%%%
\subsection{Main results}
%%%%%%%%%%%%%%%%%%%%%%%%%%%%%%%%%%%
%%%%%%%%%%%%%%%%%%%%%%%%%%%%%%%%%%%
%%%%%%%%%%%%%%%%%%%%%%%%%%%%%%%%%%%%%%%%%%%%%%%%%
\hspace*{\parindent}

Let \((\rho_n)_{n\geqslant 0}\) be a sequence of non-zero numbers with
\(\rho_0=1\). Let \((K_m(x))_{m\geqslant 0}\) be a sequence of polynomials. As a generalization of~\eqref{eq01}, we define the weighted Hessenberg matrix
\(H_n(x)=(h_{ij})_{1\leqslant i,j\leqslant n}\) by
\[
h_{ij}=
\begin{cases}
\displaystyle
\frac{\rho_i}{\rho_{j-1}\rho_{i-j+1}}K_{i-j}(x),& 1\leqslant j\leqslant i,\\[8pt]
-1,& j=i+1,\\
0,& j>i+1.
\end{cases}
\]
Thus we have
\begin{equation}\label{HnxK01}
H_n(x)=
\begin{pmatrix}
K_0(x) & -1 & 0 & \cdots & 0\\
K_1(x) & \dfrac{\rho_2}{\rho_1\rho_1}K_0(x) & -1 & \cdots & 0\\
K_2(x) & \dfrac{\rho_3}{\rho_1\rho_2}K_1(x)
& \dfrac{\rho_3}{\rho_2\rho_1}K_0(x)& \cdots & 0\\
\vdots & \vdots & \vdots & \ddots & -1\\
K_{n-1}(x)& \dfrac{\rho_n}{\rho_1\rho_{n-1}}K_{n-2}(x)
& \dfrac{\rho_n}{\rho_2\rho_{n-2}}K_{n-3}(x)& \cdots
& \dfrac{\rho_n}{\rho_{n-1}\rho_1}K_0(x)
\end{pmatrix}.
\end{equation}
\begin{lemma}\label{deter for primal poly}
Let \(F_n(x)=\det H_n(x)\) for \(n\geqslant 1\), where $H_n(x)$ is defined by~\eqref{HnxK01}. Let
\(
F(x;z)=1+\sum_{n\geqslant 1}F_n(x)\frac{z^n}{\rho_n}.
\)
Then
\[
F(x;z)= \frac{1}{1-K(x;z)},
\]
where $K(x;z)$ is given by $
K(x;z)=\sum_{m\geqslant0}K_m(x)\frac{z^{m+1}}{\rho_{m+1}}$.
\end{lemma}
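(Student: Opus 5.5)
Expanding along the last row is the natural approach: since every superdiagonal entry of $H_n(x)$ is $-1$, the recurrence \eqref{eq:hessenberg-minus-one-recurrence} applies, and the rest is generating-function bookkeeping. Put $F_0(x)=\det H_0=1$. For $n\geqslant1$, \eqref{eq:hessenberg-minus-one-recurrence} gives
\[
F_n(x)=\sum_{r=1}^{n}h_{n,r}F_{r-1}(x)=\sum_{r=1}^{n}\frac{\rho_n}{\rho_{r-1}\rho_{n-r+1}}K_{n-r}(x)F_{r-1}(x).
\]
Dividing by $\rho_n$ turns this into a convolution in the weighted sense:
\[
\frac{F_n(x)}{\rho_n}=\sum_{r=1}^{n}\frac{F_{r-1}(x)}{\rho_{r-1}}\cdot\frac{K_{n-r}(x)}{\rho_{n-r+1}}.
\]

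Next, multiply by $z^n$ and write $z^n=z^{r-1}\cdot z^{n-r+1}$. With $s=r-1$ and $m=n-r$, the right-hand side becomes the coefficient of $z^n$ in the product
\[
\Bigl(\sum_{s\geqslant0}F_s(x)\tfrac{z^s}{\rho_s}\Bigr)\Bigl(\sum_{m\geqslant0}K_m(x)\tfrac{z^{m+1}}{\rho_{m+1}}\Bigr).
\]
The first factor is $F(x;z)$, since $\rho_0=1$ and $F_0=1$; the second is $K(x;z)$.

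Summing over $n\geqslant1$ gives $F(x;z)-1=F(x;z)K(x;z)$. The left side has no constant term, and neither does the right, because $K(x;z)$ starts at $z^1$. Hence $F(x;z)(1-K(x;z))=1$.

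Finally, $1-K(x;z)$ has constant term $1$, so it is invertible in the ring of formal power series in $z$ with coefficients polynomial in $x$. This yields $F(x;z)=1/(1-K(x;z))$.

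The only delicate point is matching the index shift. The weight $\rho_{n-r+1}$ pairs with $z^{n-r+1}$, and this is exactly why $K$ is defined with $z^{m+1}/\rho_{m+1}$. One must also check that the $r=1$ term, which has $h_{n,1}=K_{n-1}(x)$ because $\rho_0=1$, fits the same pattern.
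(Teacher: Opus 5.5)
Your proof is correct and follows the paper's argument essentially verbatim: you expand via the recurrence \eqref{eq:hessenberg-minus-one-recurrence}, which applies because every superdiagonal entry is $-1$, then multiply by $z^n/\rho_n$ and sum over $n\geqslant 1$ to get $F(x;z)-1=K(x;z)F(x;z)$. Your remarks on the index shift and on the invertibility of $1-K(x;z)$ as a formal power series only make explicit what the paper leaves implicit.
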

\begin{proof}
It follows from~\eqref{eq:hessenberg-minus-one-recurrence} that
\[
F_n(x)=\sum_{r=1}^{n}
\frac{\rho_n}{\rho_{r-1}\rho_{n-r+1}}K_{n-r}(x)F_{r-1}(x)=\sum_{k=0}^{n-1}
\frac{\rho_n}{\rho_k\rho_{n-k}}K_{n-k-1}(x)F_k(x).
\]
Multiplying both sides by \(z^n/\rho_n\) and summing over \(n\geqslant1\), we obtain
\[
F(x;z)-1=K(x;z)F(x;z),
\]
and the desired result follows.
\end{proof}

The exponential generating function of $A_n(x)$ is given as follows (see~\cite{FoataSchutzenberger1970}): 
\begin{equation}\label{Axz}
A(x;z):=\sum_{n\geqslant 0}A_n(x)\frac{z^n}{n!}=\frac{1-x}{1-x{\mathrm{e}}^{(1-x)z}}=1+xz+(x+x^2)\frac{z^2}{2!}+\cdots.
\end{equation}
\begin{cor}\label{Cor01}
Setting \(A(x;z)=1/\left(1-K(x;z)\right)\), we have
\[
K(x;z)=1-\frac{1}{A(x;z)}
=
1-\frac{1-x{\mathrm{e}}^{(1-x)z}}{1-x}
=
\sum_{m\geqslant 0}x(1-x)^m\frac{z^{m+1}}{(m+1)!}.
\]
Thus \(K_m(x)=x(1-x)^m\). By Lemma~\ref{deter for primal poly}, we immediately get~\eqref{Anx-det01}.
\end{cor}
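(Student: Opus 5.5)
Take \(\rho_n=n!\) in Lemma~\ref{deter for primal poly}. Then \(F(x;z)=1+\sum_{n\geqslant1}F_n(x)z^n/n!\) is an exponential generating function, and the lemma gives \(F=1/(1-K)\) with \(K(x;z)=\sum_{m\geqslant0}K_m(x)z^{m+1}/(m+1)!\). Setting \(F=A(x;z)\) forces \(K=1-1/A(x;z)\). Once this \(K\) is expanded as a power series in \(z\), its coefficients \(K_m(x)\) are read off, and Lemma~\ref{deter for primal poly} gives \(A_n(x)=\det H_n(x)\).

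The computation has three steps.

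First, using~\eqref{Axz}, \(1/A(x;z)=(1-x{\mathrm e}^{(1-x)z})/(1-x)\). Hence
\[
K(x;z)=\frac{(1-x)-1+x{\mathrm e}^{(1-x)z}}{1-x}=\frac{x\bigl({\mathrm e}^{(1-x)z}-1\bigr)}{1-x}.
\]

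Second, expand the exponential:
\[
\frac{x}{1-x}\sum_{k\geqslant1}(1-x)^k\frac{z^k}{k!}=\sum_{m\geqslant0}x(1-x)^m\frac{z^{m+1}}{(m+1)!}.
\]
So \(K_m(x)=x(1-x)^m\). There is no obstruction at \(x=1\): as a formal power series in \(z\) the factor \(1-x\) cancels, so the coefficients are genuine polynomials in \(x\).

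Third, substitute into~\eqref{HnxK01} with \(\rho_n=n!\). The weights become
\[
\frac{\rho_i}{\rho_{j-1}\rho_{i-j+1}}=\frac{i!}{(j-1)!\,(i-j+1)!}=\binom{i}{j-1},
\]
so \(h_{ij}=\binom{i}{j-1}x(1-x)^{i-j}\) for \(j\leqslant i\), and the superdiagonal entries are \(-1\). This matches~\eqref{Anx-det01} entry by entry: the first column is \(x(1-x)^{i-1}\), and the diagonal entry is \(\binom{i}{i-1}x=ix\).

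The only step needing care is the justification of \(A=1/(1-K)\). Since \(A(x;z)\) has constant term \(1\) in \(z\), the series \(1-1/A\) exists and has zero constant term. It is therefore of the required form \(\sum_m K_m z^{m+1}/(m+1)!\). The uniqueness implicit in Lemma~\ref{deter for primal poly} then means the determinants \(F_n\) coincide with \(A_n\) for all \(n\geqslant1\).
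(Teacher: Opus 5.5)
Your proposal is correct and follows the paper's own argument: you take $\rho_n=n!$, compute $K=1-1/A(x;z)=x\bigl(\mathrm{e}^{(1-x)z}-1\bigr)/(1-x)$, read off $K_m(x)=x(1-x)^m$, and apply Lemma~\ref{deter for primal poly}. Your explicit check that the weights become $\binom{i}{j-1}$, so that the matrix matches \eqref{Anx-det01} entry by entry, only spells out what the paper leaves implicit.
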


We say that $\pi\in\msn$ is a derangement if $\pi_i\neq i$ for all $i\in [n]$. 
Denote by $\mdn_n$ the set of all derangements in $\msn$. 
The {\it derangement polynomials} are defined by $$d_n(q)=\sum_{\pi\in\mdn_n}q^{\exc(\pi)},$$
where $\exc(\pi)=\#\{i\in[n-1]:~\pi_i>i\}$ is the number of excedances of $\pi$.
The generating function of $d_n(q)$ is given as follows (see~\cite[Proposition~6]{Brenti90}):
\begin{equation*}\label{dxz-EGF}
d(q;z)=\sum_{n=0}^{\infty}d_n(q)\frac{z^n}{n!}=\frac{1-q}{\mathrm{e}^{qz}-q\mathrm{e}^{z}}.
\end{equation*}

As usual, set $[0]_q=0$.
For any positive integer $m$, let $[m]_q=1+q+\cdots+q^{m-1}$.
Setting \(d(q;z)=1/\left(1-K(q;z)\right)\), we have
\[
K(q;z)=1-\frac{1}{d(q;z)}
=
1-\frac{\mathrm{e}^{qz}-q\mathrm{e}^{z}}{1-q}
=
\sum_{m\geqslant 1}q[m]_{q}\frac{z^{m+1}}{(m+1)!}.
\]
Thus $K_m(q)=q[m]_{q}$ for $m\geqslant 0$. By Lemma~\ref{deter for primal poly}, we get the following.
\begin{cor}
For $n\geqslant 1$, we have
\begin{equation*}\label{HnxK}
d_n(q)=
\begin{vmatrix}
0 & -1 & 0 & \cdots & 0\\
q & 0 & -1 & \cdots & 0\\
q[2]_q &\binom{3}{1}q
& 0& \cdots & 0\\
\vdots & \vdots & \vdots & \ddots & -1\\
q[n-1]_q& \binom{n}{1}q[n-2]_q
&  \binom{n}{2}q[n-3]_q& \cdots
& 0
\end{vmatrix}_{n\times n}.
\end{equation*}
\end{cor}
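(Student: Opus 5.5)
This corollary is a direct application of Lemma~\ref{deter for primal poly} with $\rho_n=n!$, so that $\rho_i/(\rho_{j-1}\rho_{i-j+1})=\binom{i}{j-1}$. The plan is to identify the kernel sequence $K_m(q)$ from $d(q;z)$, exactly as in Corollary~\ref{Cor01}, and then read off the matrix entries from~\eqref{HnxK01}.

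\textbf{Computing $K(q;z)$.} First note that $d(q;0)=1$, so $1/d(q;z)$ is a well-defined power series. Set
\[
K(q;z)=1-\frac{1}{d(q;z)}=1-\frac{\mathrm{e}^{qz}-q\mathrm{e}^{z}}{1-q}.
\]
Expanding the exponentials gives
\[
K(q;z)=\frac{1-q-\sum_{k\geqslant0}(q^k-q)z^k/k!}{1-q}.
\]
The constant term is $1-q-(1-q)=0$, and the $k=1$ coefficient is $-(q-q)=0$. For $k\geqslant 2$, the coefficient of $z^k/k!$ is
\[
\frac{q-q^k}{1-q}=q\,\frac{1-q^{k-1}}{1-q}=q[k-1]_q.
\]
Writing $k=m+1$, this gives $K_m(q)=q[m]_q$ for all $m\geqslant0$; the case $m=0$ is consistent, since $[0]_q=0$ matches the vanishing $z^1$ coefficient.

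\textbf{Reading off the determinant.} By Lemma~\ref{deter for primal poly}, $d_n(q)=\det H_n(q)$, where the entries are
\[
h_{ij}=\binom{i}{j-1}K_{i-j}(q)=\binom{i}{j-1}q[i-j]_q \quad\text{for } j\leqslant i,
\]
with superdiagonal entries $-1$. Checking against the displayed matrix:
\begin{itemize}
\item The diagonal entries are $K_0=0$.
\item The first column is $\binom{i}{0}q[i-1]_q=q[i-1]_q$.
\item The last row has entries $\binom{n}{j-1}q[n-j]_q$, which matches the displayed $\binom{n}{1}q[n-2]_q,\ \binom{n}{2}q[n-3]_q,\dots$.
\item The third row has second entry $\binom{3}{1}q[1]_q=3q$, as displayed.
\end{itemize}

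There is no real obstacle here. The only point needing care is the series manipulation: the constant and linear terms must cancel so that $K_0=0$, and the index shift $k=m+1$ must be done consistently with $K(x;z)=\sum_m K_m z^{m+1}/(m+1)!$.
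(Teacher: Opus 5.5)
Your proposal is correct and follows the paper's argument exactly: you invert $d(q;z)$ to obtain $K_m(q)=q[m]_q$, with $K_0=0$ coming from the vanishing linear term, and then apply Lemma~\ref{deter for primal poly} with $\rho_n=n!$. Your explicit checks that the constant and linear terms cancel, and that the entries are $\binom{i}{j-1}q[i-j]_q$, only spell out details the paper leaves implicit.
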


Let $F(x;z)$ be given by Lemma~\ref{deter for primal poly}.
Define $$L(x;z):=\frac{\partial}{\partial x}\ln F(x;z)=\sum_{n\geqslant 1}L_n(x)\frac{z^n}{\rho_n}.$$
We can now conclude the main result of this paper.
\begin{thm}\label{thm01}
Let \(F_n(x)=\det H_n(x)\) for \(n\geqslant 1\), where $H_n(x)$ is defined by~\eqref{HnxK01}.
We have
\begin{equation}\label{HnxK02}
\dx F_n(x)=\det \widetilde{H}_n(x),
\end{equation}
where $\widetilde{H}_n(x)$ is given as follows:
\begin{equation*}%\label{LK01}
\widetilde{H}_n(x)=
\begin{pmatrix}
L_1(x) & -1 & 0 & \cdots & 0\\
L_2(x) & \dfrac{\rho_2}{\rho_1\rho_1}K_0(x) & -1 & \cdots & 0\\
L_3(x) & \dfrac{\rho_3}{\rho_1\rho_2}K_1(x)
& \dfrac{\rho_3}{\rho_2\rho_1}K_0(x)& \cdots & 0\\
\vdots & \vdots & \vdots & \ddots & -1\\
L_{n}(x)& \dfrac{\rho_n}{\rho_1\rho_{n-1}}K_{n-2}(x)
& \dfrac{\rho_n}{\rho_2\rho_{n-2}}K_{n-3}(x)& \cdots
& \dfrac{\rho_n}{\rho_{n-1}\rho_1}K_0(x)
\end{pmatrix},
\end{equation*}
and $L_n(x)$ can be computed by the following determinantal expression:
\begin{equation}\label{LK01}
L_n(x)=
\left|
\begin{array}{ccccc}
\dx K_0(x) & -1 & 0 & \cdots & 0\\
\dx K_1(x) & \dfrac{\rho_2}{\rho_1\rho_1}K_0(x) & -1 & \cdots & 0\\
\dx K_2(x) & \dfrac{\rho_3}{\rho_1\rho_2}K_1(x) &  \dfrac{\rho_3}{\rho_2\rho_1}K_0(x) & \cdots & 0\\
\vdots  & \vdots & \vdots & \ddots & -1\\
\dx K_{n-1}(x) &
\dfrac{\rho_n}{\rho_1\rho_{n-1}}K_{n-2}(x) &
\dfrac{\rho_n}{\rho_2\rho_{n-2}}K_{n-3}(x) &
\cdots &
 \dfrac{\rho_n}{\rho_{n-1}\rho_1}K_0(x)
\end{array}
\right|,
\end{equation}
which is equivalent to
\begin{equation}\label{LK02}
        L_n(x)
        =\dx K_{n-1}(x)
        +\sum_{k=1}^{n-1}
       \frac{\rho_n}{\rho_k\rho_{n-k}}K_{n-k-1}(x)L_k(x),
\end{equation}
\end{thm}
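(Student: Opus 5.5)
The plan is to work entirely at the level of generating functions, using Lemma~\ref{deter for primal poly} and the Hessenberg expansion~\eqref{eq:hessenberg-minus-one-recurrence}.

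\emph{Step 1: the recurrence and determinant for $L_n(x)$.} From Lemma~\ref{deter for primal poly} we have $\ln F(x;z)=-\ln(1-K(x;z))$. Differentiating in $x$ gives
\[
L(x;z)=\frac{\partial_x K(x;z)}{1-K(x;z)},
\qquad\text{so}\qquad
L(x;z)=\partial_x K(x;z)+K(x;z)L(x;z).
\]
We then extract the coefficient of $z^n/\rho_n$ on both sides:
\begin{itemize}
\item on the left, and in $\partial_xK$, the coefficient is $L_n(x)$ and $\dx K_{n-1}(x)$ respectively;
\item in the product $K(x;z)L(x;z)=\sum_{m\geqslant0}\sum_{k\geqslant1}K_m L_k\,\frac{z^{m+1+k}}{\rho_{m+1}\rho_k}$, we set $m=n-k-1$ and multiply by $\rho_n$.
\end{itemize}
This yields exactly~\eqref{LK02}.

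To obtain~\eqref{LK01}, we expand the right-hand determinant $D_n$ by~\eqref{eq:hessenberg-minus-one-recurrence}. Deleting the first column and the last rows leaves, in the upper-left corner, minors of the same shape. This needs some care, because the first column is special. The cleanest route is to expand along the last row with the superdiagonal $-1$'s. The term $r=1$ gives $\dx K_{n-1}\cdot\det H_0$. For $r\geqslant 2$, the term is $h_{n,r}$ times the leading $(r-1)\times(r-1)$ principal minor, which is $D_{r-1}$ because it still contains the derivative column. Hence
\[
D_n=\dx K_{n-1}+\sum_{r=2}^{n}\frac{\rho_n}{\rho_{r-1}\rho_{n-r+1}}K_{n-r}D_{r-1}.
\]
Reindexing with $k=r-1$ gives the same recurrence as~\eqref{LK02}. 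Since $D_1=\dx K_0=L_1$, induction gives $D_n=L_n$. The step to double-check here is that~\eqref{eq:hessenberg-minus-one-recurrence} applies verbatim with a modified first column. It does: the recurrence only uses the Hessenberg shape and the $-1$ superdiagonal.

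\emph{Step 2: the formula for $\dx F_n$.} Since $L=\partial_x\ln F$, we have $\partial_x F=LF=L+(F-1)L$. We write $G_n=\dx F_n$ and $G(x;z)=\sum_{n\geqslant1}G_n z^n/\rho_n$, and use $F-1=KF$. It is more convenient to derive a recurrence directly. From $F=1+KF$ we get $\partial_xF=\partial_xK\cdot F+K\,\partial_xF$, so $G_n$ is not immediately in the form we want. Instead we use $G=LF$ together with $F=1+KF$, which gives $G=L+K\cdot(LF)=L+KG$. Extracting coefficients as in Step 1 yields
\[
G_n=L_n+\sum_{k=1}^{n-1}\frac{\rho_n}{\rho_k\rho_{n-k}}K_{n-k-1}G_k .
\]
The same last-row expansion of $\det\widetilde H_n$ gives the identical recurrence, with $\det\widetilde H_1=L_1=G_1$; this initial equality holds since $F_1=K_0$ and $L_1=\dx K_0$. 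Induction then gives~\eqref{HnxK02}.

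\emph{Main obstacle.} The main obstacle is the bookkeeping in the Hessenberg expansion. The principal minors of $\widetilde H_n$ must be exactly $\widetilde H_{k}$, with the correct weights $\rho_n/(\rho_k\rho_{n-k})$ attached to $K_{n-k-1}$. This matches because $h_{n,r}$ depends only on $n$ and $r$, not on the first column. The coefficient extraction in $KG$ also needs the index shift $m+1$ in $K$'s denominator handled correctly.
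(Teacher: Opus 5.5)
Your proposal is correct and follows essentially the same route as the paper: both derive $L=\partial_x K+KL$ from $\ln F=-\ln(1-K)$, and $\partial_x F=L+K\,\partial_x F$ from $\partial_x F=LF$ together with $F=1+KF$, then match the resulting coefficient recurrences (with equal initial values) against the last-row expansion \eqref{eq:hessenberg-minus-one-recurrence} of the Hessenberg determinants. You also spell out why the determinant in \eqref{LK01} satisfies \eqref{LK02}: its leading principal minors are the smaller matrices of the same form, since each entry depends only on its row and column indices. The paper leaves this step implicit.
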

\begin{proof}
\quad (A)
Note that 
$$L(x;z)=\frac{\partial}{\partial x}\ln F(x;z)=\frac{\frac{\partial}{\partial x}F(x;z)}{F(x;z)}.$$
It follows from Lemma~\ref{deter for primal poly} that 
\(F(x;z)=1/\left(1-K(x;z)\right)\). Thus we have
\begin{equation*}
\frac{\partial}{\partial x}F(x;z)=L(x;z)+K(x;z)\frac{\partial}{\partial x}F(x;z).
\end{equation*}
Taking the coefficients of $\frac{z^n}{\rho_n}$ leads to
\begin{equation}
\dx F_n(x)=L_n(x)+\sum_{k=1}^{n-1}\frac{\rho_n}{\rho_k\rho_{n-k}}K_{n-k-1}(x)\dx F_k(x).
\end{equation}

Expanding the determinant of $\widetilde{H}_n(x)$ along the last row, we obtain
\[
\det \widetilde H_n(x)=L_n(x)+\sum_{k=1}^{n-1}
\frac{\rho_n}{\rho_k\rho_{n-k}}K_{n-k-1}(x)\det \widetilde H_k(x).
\]
Since $\dx F_1(x)=\det \widetilde H_1(x)=L_1(x)$, we see that \(\det \widetilde H_n(x)\) and
\(\dx F_n(x)\) satisfy the same recursion and initial value, so they agree.

\quad (B)
Since \(F(x;z)=1/\left(1-K(x;z)\right)\), it follows that
\[
        \ln F(x;z)=-\ln\left(1-K(x;z)\right).
\]
Differentiating both sides with respect to \(x\) gives
\[
        \frac{\partial}{\partial x} \ln F(x;z)
        =\frac{\frac{\partial}{\partial x} K(x;z)}{1-K(x;z)}.
\]
Therefore, we arrive at
\begin{equation}\label{LK}
        L(x;z)=\frac{\frac{\partial}{\partial x} K(x;z)}{1-K(x;z)}.
\end{equation}
Equivalently,
\[
       L(x;z)=\frac{\partial}{\partial x} K(x;z)+K(x;z)L(x;z).
\]
Taking the coefficients of $\frac{z^n}{\rho_n}$ leads to~\eqref{LK02}. This completes the proof.
\end{proof}

Combining Corollary~\ref{Cor01} and Theorem~\ref{thm01}, we now give the following result.
\begin{cor}
For $n\geqslant 2$, we have
\begin{equation}\label{Anx-deri01}
\dx A_n(x)=
\begin{vmatrix}
1 & -1 & 0 & \cdots & 0\\
\dx A_1(x) & 2x & -1 & \cdots & 0\\
\dx A_2(x) & 3x(1-x) & 3x & \cdots & 0\\
\vdots & \vdots & \vdots & \ddots & -1\\
\dx A_{n-1}(x)& nx(1-x)^{n-2} & \binom n2x(1-x)^{n-3} & \cdots & nx
\end{vmatrix},
\end{equation}
\begin{equation}\label{Anx-deri02}
\dx A_{n-1}(x)=
\begin{vmatrix}
1 & -1 & 0 & \cdots & 0\\
1-2x & 2x & -1 & \cdots & 0\\
(1-x)(1-3x) & 3x(1-x) & 3x & \cdots & 0\\
\vdots & \vdots & \vdots & \ddots & -1\\
(1-x)^{n-2}(1-nx) & \binom n1 x(1-x)^{n-2} & \binom n2 x(1-x)^{n-3}
&\cdots &\binom n{n-1}x
\end{vmatrix}.
\end{equation}
\end{cor}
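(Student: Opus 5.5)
The plan is to specialise Theorem~\ref{thm01} to the Eulerian data of Corollary~\ref{Cor01}, namely $\rho_n=n!$ and $K_m(x)=x(1-x)^m$. The weights then become binomial coefficients:
\[
\frac{\rho_i}{\rho_{j-1}\rho_{i-j+1}}=\binom{i}{j-1},
\]
so the $(i,j)$ entry for $2\leqslant j\leqslant i$ is $\binom{i}{j-1}x(1-x)^{i-j}$. For example, row $3$ reads $3x(1-x),\,3x$. These are exactly the non-first columns in \eqref{Anx-deri01} and \eqref{Anx-deri02}. Hence both identities reduce to identifying the first columns. For \eqref{Anx-deri01} I will use \eqref{HnxK02}, whose first column is $L_1,\dots,L_n$. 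For \eqref{Anx-deri02} I will use \eqref{LK01}, whose first column is $\dx K_0,\dots,\dx K_{n-1}$.

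\textbf{Formula \eqref{Anx-deri02}.} Since $F_n=A_n$, formula \eqref{LK01} gives $L_n$ directly as a determinant. Differentiating $K_{i-1}(x)=x(1-x)^{i-1}$ gives
\[
\dx K_{i-1}(x)=(1-x)^{i-1}-(i-1)x(1-x)^{i-2}=(1-x)^{i-2}(1-ix).
\]
For $i=1$ this equals $1$, for $i=2$ it equals $1-2x$, and for $i=3$ it equals $(1-x)(1-3x)$, matching the displayed first column. So \eqref{Anx-deri02} is exactly \eqref{LK01}, once I show that $L_n=\dx A_{n-1}$ for $n\geqslant 2$.

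\textbf{The key step: computing $L$.} By \eqref{Axz}, $A(x;z)=(1-x)/(1-x\mathrm{e}^{u})$ with $u=(1-x)z$. Differentiating $\ln A$ in $z$ gives
\[
\frac{\partial}{\partial z}\ln A(x;z)=\frac{x(1-x)\mathrm{e}^{u}}{1-x\mathrm{e}^{u}}=A(x;z)-(1-x).
\]
Now differentiate in $x$ and interchange the partial derivatives:
\[
\frac{\partial}{\partial z}L(x;z)=\frac{\partial}{\partial x}A(x;z)+1.
\]
Compare coefficients of $z^{n-1}/(n-1)!$ on both sides. This yields $L_1=\dx A_0+1=1$ and $L_n=\dx A_{n-1}(x)$ for $n\geqslant 2$. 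I expect this short computation to be the only non-mechanical point. If the identity did not come out cleanly, the fallback would be to expand $L=A\,\frac{\partial}{\partial x}K$ from \eqref{LK} directly.

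\textbf{Formula \eqref{Anx-deri01}.} By the key step, the first column $(L_1,\dots,L_n)$ of $\widetilde H_n$ is $(1,\dx A_1,\dots,\dx A_{n-1})$. Then \eqref{HnxK02} with $F_n=A_n$ is exactly \eqref{Anx-deri01}.

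As a consistency check, part (A) of the proof of Theorem~\ref{thm01} gives the recurrence
\[
\dx A_n=\dx A_{n-1}+\sum_{k=1}^{n-1}\binom nk x(1-x)^{n-k-1}\dx A_k.
\]
This agrees with the last-row expansion \eqref{eq:hessenberg-minus-one-recurrence} of the displayed determinant.
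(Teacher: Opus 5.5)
Your proof is correct and follows the paper's route. You specialize Theorem~\ref{thm01} to $\rho_n=n!$, $K_m(x)=x(1-x)^m$, then read \eqref{Anx-deri01} off \eqref{HnxK02} and \eqref{Anx-deri02} off \eqref{LK01}, after identifying $L_1=1$ and $L_n=\dx A_{n-1}(x)$ for $n\geqslant 2$. The only difference is in that key step: the paper writes $L(x;z)$ in closed form and asserts, without details, that $L_n=\frac{A_n-nxA_{n-1}}{x(1-x)}=\dx A_{n-1}$ via the recurrence for $A_n$; you instead derive $\frac{\partial}{\partial z}L=\frac{\partial}{\partial x}A+1$ from $\frac{\partial}{\partial z}\ln A=A-(1-x)$, which is a cleaner and fully explicit verification of the step the paper calls easy to verify.
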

\begin{proof}
Let $A(x;z)$ be given by~\eqref{Axz}.
Setting
$$L(x;z)=\frac{\partial}{\partial x} \ln A(x;z) = -\frac1{1-x} + \frac{{\mathrm{e}}^{(1-x)z}(1-xz)}{1-x{\mathrm{e}}^{(1-x)z}}.$$
It is easy to verify that $L_1(x)=1$ and for $n\geqslant 2$, we have
$$L_{n}(x)=\frac{A_n(x)-nxA_{n-1}(x)}{x(1-x)}=\dx A_{n-1}(x).$$
Combining~\eqref{Anx-det01} and~\eqref{HnxK02}, we obtain~\eqref{Anx-deri01}. Moreover, by~\eqref{LK01}, we get~\eqref{Anx-deri02}.
\end{proof}

In the rest of this section, 
we deduce another determinantal expression for the derivative of Eulerian polynomial.
Let \(\mathfrak S_n\) be the set of permutations of \([n]\). For
\(\pi=\pi_1\pi_2\cdots\pi_n\in\mathfrak S_n\), we always set \(\pi_{0}=\pi_{n+1}=0\).
A {\it descent} of $\pi$ is an entry $\pi_i$ such that $\pi_i>\pi_{i+1}$. Let $\des(\pi)$ be the number of descents of $\pi$. 
The {\it Eulerian polynomials} $A_n(x)$ can be defined by
$$A_n(x)=\sum_{\pi\in\ms_n}x^{\des(\pi)}.$$
Note that 
$$\dx A_n(x)=\sum_{\pi\in\ms_n}\des(\pi)x^{\des(\pi)-1},$$
which implies that the derivative of $A_n(x)$ amounts to choosing one of the descents of $\pi\in\ms_n$. 
Let $\mathfrak D_{n+1}=\left\{\pi\in\mathfrak S_{n+1}: \pi_j=n+1
\text{ and } \pi_{j-1}>\pi_{j+1}\right\}$.
If we insert $n+1$ right after a given descent of $\pi\in\ms_n$, then this descent of $\pi$ 
is replaced by a descent of a permutation in $\ms_{n+1}$, and the number of descents is not changed under this operation, which yields that 
\begin{equation}\label{Anx-com}
\dx A_n(x)=\sum_{\pi\in \mathfrak D_{n+1}}x^{\operatorname{des}(\pi)-1}.
\end{equation}

\begin{exam}
When $n=3$, the polynomial $\dx A_3(x)$ can be seen as the descent enumerator over the following permutations:
$$1234,~1324,~1342,~2134,~2413,~2314,~2341,~3124,~3412,~3214,~3241,~3421.$$
\end{exam}

For any word $w$ over $[n]$, we say that the {\it reduced form} of $w$, written as $\rm {red}(w)$, is the word obtained by replacing 
each of the occurrences of the $i$-th smallest entry in $w$ by the number $i$.   
\begin{lemma}\label{dxAnx}
For \(n\geqslant 1\), we have
$$
\dx A_n(x) = A_{n-1}(x) + (1+x)\dx A_{n-1}(x) + 2\sum_{k=1}^{n-2} \binom{n-1}{k} A_{n-1-k}(x) \dx A_k(x).
$$
\end{lemma}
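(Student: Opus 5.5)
The plan is to prove the identity at the level of exponential generating functions, using the closed form~\eqref{Axz}; the notion of reduced form just introduced suggests a combinatorial proof via~\eqref{Anx-com}, but a generating-function check is shorter and fully rigorous. Write $A=A(x;z)$ and $A_x=\frac{\partial}{\partial x}A(x;z)=\sum_{n\geqslant 0}\dx A_n(x)\frac{z^n}{n!}$, noting that $\dx A_0(x)=0$. Multiplying the claimed identity by $z^{n-1}/(n-1)!$ and summing over $n\geqslant1$, the left side becomes $\frac{\partial^2}{\partial z\,\partial x}A$.

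The first step is to convert the right side into a generating-function expression. Since $A_0=1$ and $\dx A_0=0$, the convolution $\sum_{k=0}^{n-1}\binom{n-1}{k}A_{n-1-k}\dx A_k$ differs from the sum $\sum_{k=1}^{n-2}$ in the statement only by its $k=n-1$ term, which is $\dx A_{n-1}$. The full convolution is the coefficient of $z^{n-1}/(n-1)!$ in $A\,A_x$. Hence
$$2\sum_{k=1}^{n-2}\binom{n-1}{k}A_{n-1-k}\dx A_k=[z^{n-1}/(n-1)!]\,\bigl(2AA_x-2A_x\bigr).$$
Adding $(1+x)A_x$ and $A$, the statement is therefore equivalent to the PDE
$$\frac{\partial^2 A}{\partial z\,\partial x}=A+(x-1)A_x+2AA_x .$$
I would check the small case $n=1$ separately against the index conventions: there the sum is empty and the identity reads $1=A_0$, which agrees with the constant term of the PDE.

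The second step is to derive a first-order equation in $z$ from~\eqref{Axz}. Put $t=1-x$. Differentiating $A=t/(1-x\mathrm{e}^{tz})$ gives $\partial_zA=A^2x\mathrm{e}^{tz}$. Using $x\mathrm{e}^{tz}=1-t/A$, this becomes the Riccati equation $\partial_zA=A^2-(1-x)A$. Differentiating this in $x$ yields $\partial_x\partial_zA=2AA_x+A-(1-x)A_x$, which is exactly the required PDE. Comparing coefficients then finishes the proof.

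The main obstacle is only bookkeeping: the boundary terms $k=0$ and $k=n-1$ of the convolution must be accounted for correctly, and it is precisely the $k=n-1$ term that turns $(x-1)$ into $(1+x)$. If a bijective proof were wanted instead, I would insert $n+1$ into permutations via~\eqref{Anx-com} and decompose according to the position of the entry $1$ using reduced forms; I expect that route to be much harder to make airtight.
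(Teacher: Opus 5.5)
Your proof is correct, but it takes a different route from the paper's. The paper argues combinatorially. It reads $\dx A_n(x)$ as marking one descent of $\pi\in\mathfrak S_n$ (equivalently, inserting $n+1$ right after it, as in~\eqref{Anx-com}), writes $\pi=\alpha\,1\,\beta$, and splits into three cases:
\begin{itemize}
\item $\alpha$ empty, which gives $\dx A_{n-1}(x)$;
\item $\beta$ empty, where the marked descent either lies in $\alpha$ (weighted by the extra descent at $1$, giving $x\dx A_{n-1}(x)$) or is the entry $1$ itself (giving $A_{n-1}(x)$);
\item both nonempty, where marking in $\alpha$ or in $\beta$ gives the two symmetric halves of $2\sum_{k}\binom{n-1}{k}A_{n-1-k}(x)\dx A_k(x)$.
\end{itemize}
You instead obtain the Riccati equation $\partial_z A=A^2-(1-x)A$ from the closed form~\eqref{Axz}, differentiate it in $x$, and extract coefficients. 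Your handling of the $k=0$ and $k=n-1$ convolution terms is right, and so is the check at $n=1$.

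The two proofs rest on the same structure. The decomposition $\pi=\alpha1\beta$ is the combinatorial content of your Riccati equation, which coefficientwise reads $A_n=\sum_{k=0}^{n-2}\binom{n-1}{k}A_kA_{n-1-k}+xA_{n-1}$. The paper's three cases are exactly the product rule applied to this recurrence.

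Your version is shorter and avoids the boundary bookkeeping of the case analysis. It also generalizes mechanically: differentiating Gessel's equation~\eqref{Gessel} in $x$ would give the analogous recurrence for $\dx C_n(x)$. The paper's version, in exchange, does not need the closed form of the generating function, and it explains each term on the right-hand side as a class of permutations with a marked descent.

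One small technicality: $1-x\mathrm{e}^{(1-x)z}$ has constant term $1-x$, so your manipulations should be carried out in $\mathbb{Q}(x)[[z]]$ (or analytically for $x\neq 1$). The resulting coefficient identities are then polynomial identities. This is routine and does not affect correctness.
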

\begin{proof}
Let \(\pi\in\mathfrak  S_{n}\). Consider a decomposition of  \(\pi\) : \(\pi=\alpha\,1\,\beta\), where \(\alpha\) and \(\beta\) are possibly empty words whose letters form a partition of $\{2,3,\ldots,n\}$. 
We use $\hat \alpha ,\hat\beta$ to denote the obtained words by inserting $n+1$ into $\alpha$ or $\beta$, respectively. We distinguish three cases:
\begin{itemize}
      \item [$(i)$] When \(\alpha=\emptyset\), then \(\pi=1\beta\). If $n+1$ is inserted right after a descent of $\pi$, then 
      $\rm{red}(\hat\beta)$ is an element in $\mathfrak  D_{n}$, which contributes to the term \(\dx A_{n-1}(x)\).
    \item [$(ii)$] When \(\beta=\emptyset\), then $\pi = \alpha 1$. If $n+1$ is inserted right after a descent, 
    then there are two cases, i.e., $\hat\alpha 1$ and $\alpha 1 (n+1)$. 
    The case $\hat\alpha 1$ contributes to the term $x\dx A_{n-1}(x)$ and the case $\alpha 1 (n+1)$ contributes to the term $A_{n-1}(x).$
    \item [$(iii)$] When $\alpha\neq \emptyset$ and $\beta \neq \emptyset$, assume that $\#\alpha=k$, then there are $\binom{n-1}{k}$ possible choices for $(\alpha,\beta)$, where \(1\leqslant k\leqslant n-2\). For a fixed $(\alpha,\beta)$, since $n+1$ should be inserted right after a descent, we see that $(\hat \alpha,\beta)$ contributes to the term $A_{n-1-k}(x) \dx A_k(x)$, while $(\alpha,\hat \beta)$ contributes to the term $ A_k(x) \dx A_{n-1-k}(x)$, respectively. Summing over all \(1\leqslant k\leqslant n-2\), we get the term $2\sum_{k=1}^{n-2} \binom{n-1}{k} A_{n-1-k}(x)\dx A_k(x)$.

\end{itemize}
The aforementioned three cases exhaust all the possibilities, and this completes the proof.
\end{proof}

Combining~\eqref{eq:hessenberg-minus-one-recurrence} and Lemma~\ref{dxAnx}, we obtain the following result.
\begin{thm}
For \(n\geqslant 1\), we have
\[
\dx A_n(x)=
\begin{vmatrix}
1 & -1 & 0 & 0 & \cdots & 0\\
A_1(x) & 1+x & -1 & 0 & \cdots & 0\\
A_2(x) & 2\binom{2}{1}A_1(x) & 1+x & -1 & \cdots & 0\\
A_3(x) & 2\binom{3}{1}A_2(x) & 2\binom{3}{2}A_1(x) & 1+x & \cdots & 0\\
\vdots & \vdots & \vdots & \vdots & \ddots & -1\\
A_{n-1}(x) &
2\binom{n-1}{1}A_{n-2}(x) &
2\binom{n-1}{2}A_{n-3}(x) &
2\binom{n-1}{3}A_{n-4}(x) &
\cdots &
1+x
\end{vmatrix}.
\]
\end{thm}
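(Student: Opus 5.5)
Write $D_n:=\dx A_n(x)$ and let $M_n$ denote the $n\times n$ matrix in the statement. Row $i$ of $M_n$ (for $i=1,\dots,n$) has the following entries:
\[
h_{i,1}=A_{i-1}(x),\qquad h_{i,i}=1+x \ (i\geqslant 2),\qquad h_{i,r}=2\binom{i-1}{r-1}A_{i-r}(x)\ (2\leqslant r\leqslant i-1),
\]
and $-1$ on the superdiagonal. Since all superdiagonal entries equal $-1$, we will apply the recurrence~\eqref{eq:hessenberg-minus-one-recurrence}, $\det M_n=\sum_{r=1}^n h_{n,r}\det M_{r-1}$. We then show that $\det M_n$ satisfies the recurrence of Lemma~\ref{dxAnx} with the same initial values as $D_n$, and conclude by induction on $n$.

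Base cases. For $n=1$, $\det M_1=1=D_1$, since $A_1(x)=x$. We also take $\det M_0=H_0=1$. This convention does not match $D_0=0$, so the $r=1$ term of the recurrence must be handled separately; that is the only subtle point. The matrix is in fact arranged so that the first column carries $A_{n-1}(x)$ in place of $A_{n-1}(x)\cdot D_0$, and the $r=1$ term therefore reproduces the standalone term $A_{n-1}(x)$ of Lemma~\ref{dxAnx}.

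Inductive step, for $n\geqslant 2$, assuming $\det M_k=D_k$ for $1\leqslant k\leqslant n-1$. Expanding via~\eqref{eq:hessenberg-minus-one-recurrence}:
\begin{itemize}
\item[(a)] The $r=1$ term is $h_{n,1}\det M_0=A_{n-1}(x)$.
\item[(b)] The $r=n$ term is $(1+x)\det M_{n-1}=(1+x)D_{n-1}$.
\item[(c)] For $2\leqslant r\leqslant n-1$, put $k=r-1$, so $1\leqslant k\leqslant n-2$. The term is $2\binom{n-1}{k}A_{n-1-k}(x)\det M_k=2\binom{n-1}{k}A_{n-1-k}(x)D_k$.
\end{itemize}
Summing (a), (b) and (c) gives exactly the right-hand side of Lemma~\ref{dxAnx}, so $\det M_n=D_n$.

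It remains to check the small cases and the indexing conventions. For $n=2$ the sum in (c) is empty, and Lemma~\ref{dxAnx} gives $D_2=A_1(x)+(1+x)D_1=1+2x$. This agrees with $D_2=\dx(x+x^2)$ and with the $2\times2$ determinant $(1+x)+x$. The remaining care is in matching the binomial index $\binom{n-1}{r-1}$ with $\binom{n-1}{k}$ and $A_{n-r}(x)$ with $A_{n-1-k}(x)$ under $k=r-1$, which is immediate.
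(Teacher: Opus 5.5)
Your proof is correct and follows the paper's own route: expand the determinant with the $-1$-superdiagonal Hessenberg recurrence~\eqref{eq:hessenberg-minus-one-recurrence}, match the result term by term with the recurrence of Lemma~\ref{dxAnx}, and induct on $n$. Your separate treatment of the $r=1$ term, using $\det M_0=1$ rather than $\frac{d}{dx}A_0(x)=0$, spells out a detail the paper's one-line proof leaves implicit.
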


\begin{exam}
When $n=4$, we have
\[
\begin{vmatrix}
1 & -1 & 0 & 0 \\
x & 1+x & -1 & 0 \\
x+x^2 & 4x & 1+x & -1 \\
x+4x^2+x^3& 6(x+x^2) & 6x & 1+x 
\end{vmatrix}=1+22x+33x^2+4x^3.
\]
\end{exam}
%%%%%%%%%%%%%%%%%%%%%%%%%%%%%%%%%%%
%%%%%%%%%%%%%%%%%%%%%%%%%%%%%%%%%%%
\section{Second-order Eulerian polynomials}\label{section03}
%%%%%%%%%%%%%%%%%%%%%%%%%%%%%%%%%%%
%%%%%%%%%%%%%%%%%%%%%%%%%%%%%%%%%%%
Let $[n]_2:=\{1,1,2,2,\ldots,n,n\}$. 
Following Gessel and Stanley~\cite{GesselStanley1978},
we say that a multipermutation $\sigma$ of $[n]_2$ is a {\it Stirling permutation} if for each $i$, $1\leqslant i\leqslant n$, 
all letters occurring between the two occurrences of $i$ are at least $i$.
Denote by $\mq_n$ the set of Stirling permutations of $[n]_2$. For example, $\mq_2=\{1122,1221,2211\}$.
For $\sigma\in\mq_n$, any entry $\sigma_{i}$  is called an {\it ascent} (resp.~{\it descent},~{\it plateau}) if $\sigma_{i}<\sigma_{i+1}$ (resp.~$\sigma_{i}>\sigma_{i+1}$,~$\sigma_{i}=\sigma_{i+1}$), where $i\in \{0,1,2,\ldots 2n\}$ and we set $\sigma_0=\sigma_{2n+1}=0$.
Let $\asc(\sigma)$ (resp.~$\des(\sigma)$,~$\plat(\sigma)$) be the number of ascents (resp.~descents, plateaux) of $\sigma$.
It is now well known that
$$C_n(x)=\sum_{\sigma\in\mqn}x^{\des(\sigma)}=\sum_{\sigma\in\mqn}x^{\asc(\sigma)}=\sum_{\sigma\in\mqn}x^{\plat(\sigma)}.$$
The second-order Eulerian polynomials $C_n(x)$
satisfy the recurrence relation
\[
C_n(x)=(2n-1)xC_{n-1}(x)+x(1-x)\dx C_{n-1}(x),~C_0(x)=1.
\]
In particular, 
$C_1(x)=x,~C_2(x)=x+2x^2,~C_3(x)=x+8x^2+6x^3$.

There is an increasing interest in properties of enumerative polynomials over Stirling permutations, see~\cite{Haglund12,Liu23,Ma2026JCTA}.
For example, Haglund and Visontai~\cite{Haglund12} investigated the stability of multivariate Eulerian-type polynomials over
generalized Stirling permutations.
Since each Stirling permutation $\sigma\in\mqn$ can be represented as
$\sigma'1\sigma''1\sigma'''$, where $\sigma'$, $\sigma''$ and $\sigma'''$ are all Stirling permutations (possibly empty).
Clearly, the descent number of $\sigma$ is the sum of the descent numbers of $\sigma'$, $\sigma''$ and $\sigma'''$, 
unless $\sigma'''$ is empty in which case $\sigma$ has an additional descent. By this observation,
Gessel~\cite{Gessel20} found that
\begin{equation}\label{Gessel}
\frac{\mathrm{\partial}}{\mathrm{\partial}z}C(x;z)=C^2(x;z)\left(C(x;z)+x-1\right),
\end{equation}
where $C(x;z)=\sum_{n=0}^\infty C_n(x)\frac{z^n}{n!}$. 
As an application of~\eqref{Gessel}, we now give an enlightening proof of the following result, which was recently observed
from a recursion~\cite[Theorem 3]{Ma2026JCTA}.
\begin{thm}\label{thm:second-order-kernel}
The second-order Eulerian polynomial \(C_n(x)\) can be expressed as the following lower Hessenberg determinant of order \(n\):
\begin{equation}\label{Cnx-det}
C_{n}(x)=\begin{vmatrix}
x&-1&0&\cdots&0&0\\
C_1(x)&\binom{2}{1}x&-1&\cdots&0&0\\
C_2(x)&\binom{3}{1}C_1(x)&\binom{3}{2}x&\cdots&0&0\\
\vdots&\vdots&\vdots&\ddots&\ddots&\vdots\\
C_{n-2}(x)&\binom{n-1}{1}C_{n-3}(x)&\binom{n-1}{2}C_{n-4}(x)&\cdots&\binom{n-1}{n-2}x&-1\\
C_{n-1}(x)&\binom{n}{1}C_{n-2}(x)&\binom{n}{2}C_{n-3}(x)&\cdots&\binom{n}{n-2}C_1(x)&\binom{n}{n-1}x
    \end{vmatrix}_{n\times n}.
  \end{equation}
\end{thm}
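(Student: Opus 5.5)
The plan is to recognise \eqref{Cnx-det} as a special case of Lemma~\ref{deter for primal poly} with $\rho_n=n!$, and then to verify the required generating-function identity using Gessel's equation~\eqref{Gessel}.

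With $\rho_n=n!$, the entries of~\eqref{HnxK01} become
$$h_{ij}=\frac{i!}{(j-1)!\,(i-j+1)!}K_{i-j}(x)=\binom{i}{j-1}K_{i-j}(x),\qquad 1\leqslant j\leqslant i.$$
The matrix in~\eqref{Cnx-det} has exactly this shape if we set $K_0(x)=x$ and $K_m(x)=C_m(x)$ for $m\geqslant1$. Indeed:
\begin{itemize}
\item the first column is $K_{i-1}$, i.e.\ $x, C_1,\ldots,C_{n-1}$;
\item the diagonal is $\binom{i}{i-1}K_0=ix$;
\item the entry in position $(i,j)$ is $\binom{i}{j-1}C_{i-j}$.
\end{itemize}
Compactly, $K_m(x)=C_m(x)+(x-1)\delta_{m,0}$.

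By Lemma~\ref{deter for primal poly}, the determinant $F_n(x)$ of this matrix satisfies $1+\sum_{n\geqslant1}F_n(x)z^n/n!=1/(1-K(x;z))$, where
$$K(x;z)=\sum_{m\geqslant0}K_m(x)\frac{z^{m+1}}{(m+1)!}=\int_0^z\bigl(C(x;t)+x-1\bigr)\,\mathrm{d}t.$$
It therefore suffices to prove the identity
$$C(x;z)=\frac{1}{1-K(x;z)},\qquad\text{equivalently}\qquad \frac{1}{C(x;z)}=1-\int_0^z\bigl(C(x;t)+x-1\bigr)\,\mathrm{d}t.$$
Comparing coefficients of $z^n/n!$ then gives $F_n(x)=C_n(x)$.

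To prove this identity, compare both sides as formal power series in $z$. At $z=0$ both sides equal $1$, since $C_0=1$. For the $z$-derivatives, the left side gives $-\frac{\partial}{\partial z}C/C^2$, and the right side gives $-(C+x-1)$. These agree precisely by Gessel's equation~\eqref{Gessel}, $\frac{\partial}{\partial z}C=C^2(C+x-1)$. Note that $C$ is invertible as a power series since $C_0=1$.

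The only delicate step is the first one: matching the index conventions of~\eqref{HnxK01} exactly, in particular that the $(i,j)$ entry uses $\binom{i}{j-1}$ and that $K_0$ must be $x$ rather than $C_0=1$. This accounts for the correction term $x-1$ in $K$, which is exactly the term appearing in Gessel's equation. As a sanity check, for $n=2$ the recurrence from~\eqref{eq:hessenberg-minus-one-recurrence} gives $C_1+2x\,C_1=x+2x^2=C_2$.
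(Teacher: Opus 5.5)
Your proof is correct and takes essentially the same approach as the paper: you specialize Lemma~\ref{deter for primal poly} to $\rho_n=n!$ and use Gessel's equation~\eqref{Gessel} to show $1-1/C(x;z)=xz+\int_0^z\bigl(C(x;s)-1\bigr)\,ds$, which gives $K_0=x$ and $K_m=C_m$ for $m\geqslant 1$. The only difference is direction: the paper defines $K^C=1-1/C$ and integrates $\frac{\partial}{\partial z}K^C=C+x-1$ to read off the coefficients, whereas you read $K$ off the matrix and verify $1/C=1-K$ by matching constant terms and $z$-derivatives.
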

\begin{proof}
For $C(x;z)=\sum_{n=0}^\infty C_n(x)\frac{z^n}{n!}$,
if we write \(C(x;z)=1/\left(1-K^C(x;z)\right)\), then $$K^C(x;z)=1-1/C(x;z).$$ 
It follows from~\eqref{Gessel} that 
$$\frac{\partial}{\partial z}K^C(x;z)=\frac{\frac{\partial}{\partial z}C(x;z)}{C^2(x;z)}=C(x;z)+x-1.$$
Therefore, we obtain
\[
K^C(x;z)=xz+\int_0^z\bigl(C(x;s)-1\bigr)\,ds
=xz+\sum_{m\geqslant 1}C_m(x)\frac{z^{m+1}}{(m+1)!}.
\]
Thus $K^C_0(x)=x$ and $K^C_m(x)=C_m(x)$ for $m\geqslant 1$.
Then~\eqref{Cnx-det} follows from Lemma~\ref{deter for primal poly}.
\end{proof}

We now define
\[
L^C(x;z):=\sum_{n\geqslant 1}L_n^{C}(x)\frac{z^n}{n!}=\frac{\partial}{\partial x}\ln C(x;z).
\]
As given in~\eqref{LK}, we have
\[
L^C(x;z)=\frac{\frac{\partial}{\partial x}K^C(x;z)}{1-K^C(x;z)}.
\]

Therefore, by Theorem~\ref{thm01}, we obtain the following result.
\begin{thm}\label{thm:second-order-same-kernel-derivative}
We have
\[
\dx C_n(x)=
\left|
\begin{array}{ccccc}
L_1^{C}(x) & -1 & 0 & \cdots & 0\\
L_2^{C}(x) & 2x & -1 & \cdots & 0\\
L_3^{C}(x) & 3C_1(x) & 3x & \cdots & 0\\
\vdots & \vdots & \vdots & \ddots & -1\\
L_n^{C}(x)
& \binom n1C_{n-2}(x)
& \binom n2C_{n-3}(x)
& \cdots
& \binom n{n-1}x
\end{array}
\right|,
\]
where $L_n^{C}(x)$ can be computed by the following determinantal expression:
\[
L_n^{C}(x)=
\left|
\begin{array}{ccccc}
1 & -1 & 0 & \cdots & 0\\
\dx C_1(x) & 2x & -1 & \cdots & 0\\
\dx C_2(x) & 3C_1(x) & 3x & \cdots & 0\\
\vdots & \vdots & \vdots & \ddots & -1\\
\dx C_{n-1}(x)
& \binom n1C_{n-2}(x)
& \binom n2C_{n-3}(x)
& \cdots
& \binom n{n-1}x
\end{array}
\right|.
\]
\end{thm}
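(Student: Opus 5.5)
The plan is to specialize Theorem~\ref{thm01} to the kernel found in the proof of Theorem~\ref{thm:second-order-kernel}. We take $\rho_n=n!$, so that $\frac{\rho_i}{\rho_{j-1}\rho_{i-j+1}}=\frac{i!}{(j-1)!(i-j+1)!}=\binom{i}{j-1}$. With this choice, Lemma~\ref{deter for primal poly} and the expansion $C(x;z)=1/(1-K^C(x;z))$ identify $F_n(x)=C_n(x)$. Here $K^C_0(x)=x$ and $K^C_m(x)=C_m(x)$ for $m\geqslant 1$, exactly as established in the proof of Theorem~\ref{thm:second-order-kernel}.

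Next we read off the entries of $H_n(x)$. The $(i,j)$ entry with $2\leqslant j\leqslant i$ is $\binom{i}{j-1}K^C_{i-j}(x)$. This gives $\binom{i}{i-1}x$ on the diagonal and $\binom{i}{j-1}C_{i-j}(x)$ below it. Row $n$ therefore reads $\binom n1 C_{n-2}(x),\ \binom n2 C_{n-3}(x),\ldots,\binom n{n-1}x$, matching the columns $2,\ldots,n$ displayed in both determinants of the statement. Also, $L^C(x;z)=\frac{\partial}{\partial x}\ln C(x;z)$ is precisely the series $L(x;z)$ of Theorem~\ref{thm01} for this choice of $F$. Hence \eqref{HnxK02} yields the first determinant for $\dx C_n(x)$ verbatim, with first column $L^C_1(x),\ldots,L^C_n(x)$.

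For the second determinant we apply \eqref{LK01}, whose first column is $\dx K^C_{m}(x)$ for $m=0,\ldots,n-1$. Since $\dx K^C_0(x)=\dx x=1$ and $\dx K^C_m(x)=\dx C_m(x)$ for $m\geqslant 1$, this column becomes $1,\dx C_1(x),\ldots,\dx C_{n-1}(x)$. The remaining columns coincide with those already computed.

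The only step needing care is the bookkeeping of the weights $\binom{i}{j-1}$ against the indices of $K^C$. In particular, we must check that the diagonal uses $K^C_0=x$ rather than $C_0=1$, since the special value $K^C_0(x)=x$ is what distinguishes this kernel. No further obstacle is expected, because the theorem is a direct specialization of Theorem~\ref{thm01}.
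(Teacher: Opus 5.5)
Your proposal is correct and matches the paper's argument. The paper obtains this theorem by specializing Theorem~\ref{thm01} with $\rho_n=n!$ and the kernel $K^C_0(x)=x$, $K^C_m(x)=C_m(x)$ for $m\geqslant 1$ from the proof of Theorem~\ref{thm:second-order-kernel}; your bookkeeping of the weights $\binom{i}{j-1}$, the first column $\dx K^C_m(x)$, and the diagonal carrying $K^C_0=x$ rather than $C_0=1$ is exactly the verification the paper leaves implicit.
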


\begin{exam}
Note that $L_1^C(x)=1$, $L_2^C(x)=1+2x$ and $L_3^C(x)=1+10x+6x^2$.
Then 
\[
\dx C_3(x)=\left|
\begin{array}{ccccc}
1 & -1 & 0\\
1+2x & 2x & -1 \\
1+10x+6x^2 & 3C_1(x) & 3x \\
\end{array}
\right|=1+16x+18x^2.
\]
\end{exam}

For $\sigma\in\mq_n$, if the two copies of $n+1$ are inserted right after a plateau of $\sigma$, then the two copies of $n+1$ must be adjacent, so we create one ascent and one descent, while the number of plateaux is not changed.
In the same way as the proof of~\eqref{Anx-com}, we find that
\[
\dx C_n(x)
=
\sum_{\sigma\in\mathfrak D_{n+1}^{(2)}}
x^{\operatorname{plat}(\sigma)-1},
\]
where $
\mathfrak D_{n+1}^{(2)}=
\left\{\sigma\in\mathcal Q_{n+1}:
\text{if }\sigma_j=\sigma_{j+1}=n+1,\text{ then }
\sigma_{j-1}=\sigma_{j+2}\right\}$. 
\begin{exam}
The polynomial $\dx C_2(x)=1+4x$ is the plateau enumerator of the following Stirling permutations:
$112332,~133122,~123321,~221331,~233211$.
\end{exam} 
%%%%%%%%%%%%%%%%%%%%%%%%%%%%%%%%%%%
%%%%%%%%%%%%%%%%%%%%%%%%%%%%%%%%%%%
%%%%%%%%%%%%%%%%%%%%%%%%%%%%%%%%%%%
%%%%%%%%%%%%%%%%%%%%%%%%%%%%%%%%%%%

\end{document}